\documentclass{amsart}
\usepackage{graphicx}
\usepackage{amsthm}
\usepackage{amsmath}
\usepackage[utf8]{inputenc}
\usepackage{amsfonts} 
\usepackage[english]{babel}
\usepackage{tikz}
\usepackage{pgfplots}
\usepackage{mathrsfs}
\usepackage{hyperref}
\usetikzlibrary{arrows}
\newtheorem{Theorem}{Theorem}[section]
\newtheorem*{Theorem*}{Theorem}
\newtheorem{Proposition}{Proposition}[section]
\newtheorem{Lemma}{Lemma}[section]
\newtheorem{Definition}{Definition}[section]
  
\newtheorem{Remark}{Remark}[section]

\newtheorem{Corollary}{Corollary}[section]

\usepackage{xcolor}
 % Note to self to fix or fillin
 % A question on form or content.
 % to do eventually
 % to fix immediately
 % comments

\title{Graph skeletons and diminishing minors}

\author{Michael Bruner}
\address{University of Montana, 32 Campus Drive
Missoula, MT 59812}
\email{michael@leebruner.com}

\author{Atish Mitra}
\address{Montana Technological University, 1300 West Park Street Butte, MT 59701}
\email{amitra@mtech.edu}

\author{Heidi Steiger}
\address{Montana Technological University, 1300 West Park Street Butte, MT 59701}
\email{hsteiger@mtech.edu}

\keywords{graph theory, coarse geometry, bottlenecking, coarse bottlenecking, graph minor, coarse skeleton}
\subjclass[2020]{51F30, 05C10}

%\date{\today}

\begin{document}

\maketitle

\begin{abstract}
   We introduce the notion of coarse bottlenecking in graphs and coarse skeletons of graphs and show how bottlenecking guarantees that a skeleton resembles (up to quasi-isometry) the original graph. We show how these tools can be used to simplify the structure of graphs upto quasi-isometry that have an excluded asymptotic minor, reducing it to a skeleton of the original containing no $3$-fat minor. We give an example to show that a similar result does not hold for $2$-fat minors. This makes progress towards a Conjecture posed by Georgakopoulos and Papasoglu (\cite{GP23} Conjecture 1.1).
\end{abstract}

\section{Introduction}

Coarse geometry has natural connections with graph theory, several recent works have found new results and suggest some far reaching directions. In \cite{GP23} Georgakopoulos and Papasoglu list several such results by various groups of researchers, and herald the emergence of the field of ``Coarse Graph Theory".

Some of the themes of \cite{GP23} are looking at graph minors, their coarse versions, and the role they play in exploring the coarse geometry of graphs. The paper also looks at metric trees and stars, and, in combination with a paper by Fujiwara and Papasoglu \cite{FP23}, outerplanar graphs, to give coarse characterizations in terms of asymptotic minor exclusions and some metric properties.

Our work is inspired by some of the questions and techniques in the papers mentioned above. Section \ref{Section:CoarseSkel} gives a general approach to constructing a ``Coarse Skeleton" of a graph in terms of a ``Scale" and a ``Connectivity". We consider coarse bottlenecking (Definitions \ref{nEdgeB} \& \ref{mFnB}) as an natural theme to study for understanding the coarse geometry of graphs. In the case of $n=1,2$, coarse bottlenecking has been previously explored in the context of trees \cite{M05} and cacti \cite{FP23}. It has proved to be an interesting concept. We define $n$-edge bottlenecking and its coarse version and show that coarse bottlenecking is one method to ensure that the skeletons of a graph capture its structure. Towards unraveling connections between asymptotic and ordinary minor exclusion, we provide the following summary of main results.
\begin{Theorem*}\textbf{(See Theorem \ref{Theorem:StarvingMinor} and Theorem \ref{M,M reduces to 2})}
 
    Let $G$ be a coarsely bottlenecked graph that has some excluded asymptotic minor $H$.
    \begin{itemize}
        \item If $G$ contains an at most $M$ fat $H$-minor for $M>4$ then a $(2,2)$-skeleton of $G$ will contain an at most $(M+\frac{M}{2}-2)$-fat $H$ minor.
        \item There exists a graph $G'$ that is quasi-isometric to $G$ such that $G'$ contains at most a $2$-fat $H$ minor. Furthermore if $G$ contains no $M$-fat $H$ minor then such a $G'$ may be constructed by taking the $(M,M)$-skeleton of $G$.
    \end{itemize} 
\end{Theorem*}

Shortly after a prior version of this work was put on arXiv we were informed by Victor Chepoi of a related work \cite{CDNRV2010} that introduces and uses a notion of ``relaxed metric minors" that is similar to a fat minor and uses a construction called a ``layered partition".

James Davies and Victor Chepoi informed us of \cite{DHIM}, that both gives a counter example to the general case of \cite{GP23} Conjecture 1.1 and presents a different approach to quasi-isometrically producing graphs with at most $2$-fat $H$ minors from graphs with no asymptotic $H$ minor. In \cite{DHIM} it is shown how by adding a large number of edges to a graph that does not contain an asymptotic $H$ minor will produce a "power graph" that contains at most a $2$-fat $H$ minor. This is substantially different from the techniques presented in our Section \ref{Section:CoarseSkel} in several ways. The power graph approach has the advantage of giving a quasi-isometry for any graph, whereas the skeleton approach requires an additional assumption to ensure quasi-isometry. An advantage of our skeleton approach is that it produces a simpler graph, whereas the power graph is produced by adding many edges and will thus always contain the original graph as a subgraph.

\section{Preliminaries} \label{Prelims}
The coarse point of view was popularized by the groundbreaking work of Gromov \cite{G93}. We provide some terminology of graph theory and coarse geometry, we refer the reader to standard texts \cite{BMGT} and \cite{Roe} on the subjects for more details. Unless otherwise noted we will always assume that any graph or metric space we work with is connected and unbounded.
\subsection{Metric Geometry}

\begin{Definition}\textbf{(Length/Geodesic Spaces)}
    A metric space $X$ is said to be a length space if for any two points $x$ and $y$ in $X$, $d(x,y)$ is the infimum of the lengths of all $x,y$ paths. In cases where this infimum can always be achieved the space is said to be a geodesic space.
\end{Definition}

\begin{Definition}\textbf{(Graph, Path, Cycle, Graph Metric, Multi-edge, Tree)}
    Most of this work will be focused on the study of connected metric graphs. A graph $G$ is defined to be two sets, the vertices $V(G)$ and the edges $E(G)$. These sets have an inclusion relation between them such that each edge is associated with exactly two vertices, call these its endpoints. A path in a graph is a sequence of edges that joins a sequence of distinct vertices. A set $X\subset V(G)$ is said to be connected if there is a path between any two vertices in $X$ that contains only vertices from $X$. For $x,y\in V(G)$ an $x,y$ path is a path where the first vertex is $x$ and the last vertex is $y$. For sets $X,Y\subset V(G)$ an $X,Y$ path is an $x,y$ path for some $x\in X$ and some $y\in Y$. Two paths are said to be edge independent if they share no edges. Two paths are said to be internally disjoint if they share at most the first and last vertices, said to be the endpoints of the paths. A cycle is a pair of internally disjoint paths that share both their endpoints. There is a natural metric on the vertices of a graph, where the distances between two vertices $d(x,y)$, is taken to be the fewest number of edges in an $x,y$ path. We allow different edges of a graph to share both endpoints, this is called a multi-edge. If any two vertices have exactly one path between them a graph is said to be a tree.
\end{Definition}

\begin{Definition}\textbf{(Coarse/Quasi-Isometric Embedding and Equivalences)}
    For metric spaces $X,X'$ function $f:X\to X'$  is said to be a coarse embedding if there exist functions $\rho^+$ and $\rho^-$ such that for all $x,y\in X$.
    $$\rho^-(d(x,y))\leq d(f(x),f(y))\leq\rho^+(d(x,y))$$
    Furthermore $f$ is said to be a quasi-isometric embedding if there exist real constants $a\geq 1$ and $b\geq 0$ such that for all $x,y\in X$.
    $$\frac{1}{a}d(x,y)-b\leq d(f(x),f(y))\leq ad(x,y)+b$$\
    If there is a coarse embedding of $X\to X'$ and $X'\to X$ these spaces are said to be coarsely equivalent and, if there are quasi-isometric embedding with constants $a$ and $b$, then the spaces are said to be $(a,b)$-quasi-isometric and $f$ is a quasi-isometry. 
\end{Definition}
\begin{Remark}
    The composition of two coarse embeddings is a coarse embedding, the composition of two quasi-isometries is a quasi-isometry.
\end{Remark}

\begin{Remark}
    Any length space is quasi-isometric to some graph. As such, most results proven about the coarse geometry of a graph will extend to a length space.
    \end{Remark}
    \begin{proof}
        By taking any length space $X$, 
and fixing a constant $\epsilon>1$ we take a set of points $S$ such that the $\epsilon$ balls around $S$ cover $X$ and any two points in $S$ are more than $\epsilon$ apart. A graph can be constructed whose vertices are the points in $S$, with edges between them if $2\epsilon$ balls intersect. The natural function taking each $x\in X$ to the closest point of $S$ will be a quasi-isometry.
    \end{proof}

\begin{Definition}\textbf{($n$-edge bottlenecking)}\label{nEdgeB}
    A graph $G$ is said to have $n$-edge bottlenecking if for any two disjoint connected sets $X,Y\subset V(G)$, there exists a set $S\subseteq E(G)$ of size $n$ such that for any $X,Y$ path $P$ contains an element of $S$.
\end{Definition}

\begin{Definition}\textbf{(Minor)}
    A minor of a graph $G$ is a graph $H$ that can be produced from a subgraph of $G$ by a series of edge contractions, where a contraction of an edge is to remove the edge and merge its endpoints. A family of graphs is said to be minor-closed if every minor of a graph in the family is also in the family.
\end{Definition}
\begin{Remark}
    In \cite{GP23} an equivalent definitions for graph minors is given. We may borrow the terminology of \cite{GP23} where an $H$-minor is connected sets called the ``branch sets" corresponding to the vertices of $H$ and ``branch paths" corresponding to the edges of $H$, such that replacing each branch set with a vertex, and each branch path with an edge produces the graph $H$. Each of these definitions are coarsened in a similar way as in Section \ref{Section:CGT}.
\end{Remark}

\subsection{Coarse Graph Theory}
\label{Section:CGT}
\begin{Definition}\textbf{($M$-connected sets)}
    A set $X$ is said to be $M$-connected if for any two $x,y\in X$, there exists a sequence of points $x_0,x_1,x_2...x_n\in X$ such that $x=x_0,y=x_n$ and $d(x_i,x_j)\leq M$ if $|i-j|\leq 1$.
\end{Definition}

\begin{Remark}
    For sets of vertices in a graph being $1$-connected is equivalent to being connected.
\end{Remark}

\begin{Definition}\textbf{($M$-disjoint sets)}
    Two sets $X,Y$ are said to be $M$-disjoint if for all $x\in X$ and all $y\in Y$ $d(x,y)>M$.
\end{Definition}

\begin{Definition}\textbf{($M$-neighborhood/Coarse Image of a Set)}
    The $M$ neighborhood of a set $X$ denoted as $N_M(X)$ contains all $y\in G$ where there exists an $x\in X$ such that $d(f(x),y)<M$. That is $N_M(X):=\{y\in G | \text{ there exists } x\in X,  d(f(x),y)<M\}$. Let $G$ and $G'$ be graphs, let $f:G\to G'$ be a quasi-isometry and $M\in\mathbb{R}^+$. The $M$-coarse image of some $X\subseteq G$, denoted $N_M(f(X))$ is the set of elements of $G'$ that are within $M$ distances of an element of the image of $X$. That is $N_M(f(X)):=\{y\in G' | \text{ there exists } x\in X,  d(f(x),y)<M\}$.
\end{Definition}

\begin{Definition}\textbf{($M$-fat/Coarse $n$-bottlenecking)}\label{mFnB}
    A graph $G$ is said to have $M$-fat $n$-bottlenecking for some $M,n\in\mathbb{N}$ if for any two connected, $M$-disjoint subgraphs $X,Y\subset G$, there exists a set $S\subset V(G)\setminus (V(X)\cup V(Y))$ of size $n$ such that every $X,Y$ path intersects $N_M(S)$. If a graph is $M$-fat $n$-bottlenecked for some $M$ it is said to be coarsely $n$-bottlenecked.
        \begin{Remark}
            $M$-fat $n$-bottlenecking implies $M'$-fat $n'$-bottlenecking for all $M'\geq M$ and $n'\geq n$. This shows that coarse bottlenecking defines a spectrum of graphs, see \cite{BMS} for an exploration of several properties related to this spectrum.
        \end{Remark}
\end{Definition}

\begin{Definition}\textbf{(M-fat/Asymptotic Minor)} (See also \cite{GP23})
    A graph $H$ is said to be a $M$-fat minor of $G$ if $H$ is a minor of $G$ that can be produced in such a way that the sets that contract to form the edges and vertices of $H$ are $M$-disjoint unless they are incident in $H$. If $G$ contains a $M$-fat $H$ minor for all $M\in\mathbb{N}$, $H$ is said to be an asymptotic minor of $G$.
\end{Definition}

The next 2 lemmas collects several general results that establish some techniques we may use in coarse graph theory. Lemma \ref{Lemma:IsAQI} gives several simple operations that may be combined together to quasi-isometrically manipulate a graph, while Lemma \ref{Lemma:LongLemma} outlines several coarse properties of a graph that are preserved under quasi-isometry.

\begin{Lemma}\label{Lemma:IsAQI}
    Given a graph $G$, a quasi-isometric graph $G'$ may be produced by any of the following operations. In each case $G'$ is produced from $G$ and there is a natural map from the vertices of $G$ to the vertices of $G'$.
    \begin{enumerate}
    \item Adding an single edge between two vertices. 
        \begin{proof}
        Consider $G'$ to be $G$ with an edge added between two vertices $x$ and $y$. Now for any vertices $x',y'$ any path in $G$ is a path in $G'$ so $d_{G'}(x',y')\leq d_G(x',y')$. Furthermore any path in $G'$ that does not exist in $G$ must use the newly added edge, and thus could instead use a path of length $d_G(x,y)$ thus $d_G(x',y')\leq d_{G'}(x',y')+d_G(x,y)$
        \end{proof}
    \item Adding many edges between pairs of vertices with distance bounded by some $M\in \mathbb{N}$. 
        \begin{proof}
            Consider $G'$ to be $G$ with many (possibly infinitely many) edges added where the endpoints of an added edge have distance in $G$ at most $M$. Now for any vertices $x',y'$ as any path in $G$ is a path in $G'$ $d_{G'}(x',y')\leq d_G(x',y')$. As any path in $G'$ may have every edge that is not in $G$ replaced by a path of at most $M$ we have $d_G(x',y')\leq Md_G'(x',y')$.
        \end{proof}
    \item Removing an edge between vertices that lie on a common cycle.
        \begin{proof}
        Consider $G'$ to be $G$ with some edge between two vertices $x$ and $y$ that lie on a common cycle removed. Now for any vertices $x',y'$ as every path in $G'$ exists in $G$ we have $d_G(x',y')\leq d_{G'}(x',y')$. As any path in $G$ that does not exist in $G'$ must use the removed edge, and thus can use the other $x,y$ path around the cycle we have $d_G(x',y')\leq d_{G'}(x',y')+d_{G'}(x,y)$.
        \end{proof}
        
        A second proof of (3) would be to observe that $G$ is $G'$ with an edge added and thus we have a quasi-isometry by (1). A similar argument gives the following Corollary of (2).
        
    \begin{Corollary}
        Removing many edges between many pairs of vertices so long as there remains a path of length bounded by some $M\in \mathbb{N}$ between the endpoints of any removed edge.
    \end{Corollary}
    \item Subdividing every edge $M\in\mathbb{N}$ times.
    \begin{proof}
        Consider $G'$ to be $G$ with every edge subdivided $M$ times. Now any path in $G$ has each edge replaced by a path of length $M$, so $d_G(x,y)\leq d_{G'}(x,y)\leq Md_G(x,y)$
    \end{proof}
    \begin{Corollary}
        Subdividing each edge in a set of edges at most $M\in \mathbb{N}$ times.
    \end{Corollary}
    \item Contracting a single edge.
    \begin{proof}
        Consider $G'$ to be created from $G$ by merging two vertices that have an edge between them in $G$. Any path in $G$ that used this edge has had its length reduced by $1$ and any path in $G'$ that contains this vertex gives at most a path one longer in $G$ so we have $d_{G'}(x,y)\leq d_G(x,y)\leq d_{G'}(x,y)+1$
    \end{proof}
    \begin{Corollary}\label{Corollary: uniformly bounded}
        Contracting a set where the connected components are uniformly bounded.
    \end{Corollary}
    \begin{Remark}
        One must be cautious to ensure that the connected components are bounded, as an example consider an infinite ray, we may contract any edge or set of finitely many edges, but we may not contract every edge as the union of all edges is connected and unbounded.
    \end{Remark}
\end{enumerate}
\end{Lemma}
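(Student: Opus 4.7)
The plan is to verify each of the five operations separately and then observe that the stated corollaries follow from iterating the corresponding base case; the earlier remark that compositions of quasi-isometries are quasi-isometries is what makes this iteration legitimate. In every case one establishes a pair of inequalities comparing $d_G$ with $d_{G'}$ on the shared vertex set, and the quasi-isometric map is taken to be either the inclusion of the shared vertex set (for the addition and subdivision items) or the nearest-surviving-vertex assignment (for the removal and contraction items).

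For the two \emph{edge-addition} items, the inequality $d_{G'} \le d_G$ is free since every $G$-path is still a $G'$-path. In the other direction, a $G'$-geodesic that traverses a newly added edge between vertices $u,v$ can be rerouted by substituting an original $G$-path of length at most $M$ for each such edge, yielding $d_G \le M\, d_{G'}$; for item (1) this is an additive rather than multiplicative bound of $d_G(x,y)$. The two \emph{edge-removal} items are the mirror image: $d_G \le d_{G'}$ is immediate because paths in $G'$ still sit inside $G$, and each removed edge can be rerouted along the guaranteed cycle (or bounded detour of length at most $M$ in the corollary), producing $d_{G'} \le M\, d_G$. The \emph{subdivision} item and its corollary simply scale distances on the original vertex set by the subdivision factor, with a bounded additive error for the new interior vertices.

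The step I would treat most carefully is item (5) and its Corollary on contracting uniformly bounded sets, since the authors' Remark explicitly flags this as the case where the uniformity hypothesis is indispensable. Contracting never increases any distance, so one direction $d_{G'}(f(x), f(y)) \le d_G(x,y)$ is immediate. For the converse I would lift a $G'$-geodesic of length $L$ to a $G$-walk by expanding each visited ``contracted vertex'' back into a path through its preimage of length at most $D$, where $D$ is the uniform diameter bound on the contracted components. This yields $d_G(x,y) \le (D+1)\, d_{G'}(f(x), f(y)) + D$, and combined with the fact that every vertex of $G$ lies within distance $D$ of its image under the nearest-representative map, one obtains quasi-isometry constants depending only on $D$. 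The case of contracting an infinite ray mentioned in the Remark is exactly what rules out dropping the uniform bound, so controlling the constants in the lift is the only genuinely delicate step.
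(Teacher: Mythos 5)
Your proposal is correct and matches the paper's approach: each operation is handled by a direct two-sided comparison of $d_G$ and $d_{G'}$ on the shared vertex set, with uniformly bounded reroutings or lifts, and your explicit lift argument for the contraction corollary (giving $d_G \le (D+1)\,d_{G'} + D$) is in fact more detailed than the paper's, which only proves the single-edge case. One small caution: the corollaries cannot be obtained by literally composing infinitely many single-step operations, since quasi-isometry constants degrade under composition; it is your uniform rerouting and lifting bounds that actually carry those cases, so the opening appeal to iteration should be dropped or restricted to finitely many operations.
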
 

\begin{Lemma}\label{Lemma:LongLemma}
   Let $G$ and $G'$ be graphs, let $f:G\to G'$ be an $(a,b)$-quasi-isometry.
\begin{enumerate}
    \item For $M>a+b$ any connected $H\subset G$, $f(H)$ is $M$-connected set.
    \begin{proof}
        For any two $f(x),f(y)$ in $f(H)$ there are vertices $x,y$ in $H$ with a path consisting of vertices $p_0,p_1,p_2...,p_N$ such that $p_0=x,d(p_n,p_n+1)=1,p_N=y$. The images of these vertices make a set $f(p_0),f(p_1),f(p_2)...,f(p_N)$ such that $f(p_0)=f(x),d(f(p_n),f(p_n+1))\leq a+b,f(p_N)=f(y)$. This gives that the set $f(H)$ is $M$-connected.
    \end{proof}
    \begin{Corollary}\label{Corallary:(a+b)connected}
        The $(a+b)$-coarse image of a connected set is connected.
    \end{Corollary}
    \item For $M>a+ab$, any two vertices $x,y$ that are $M$-disjoint $f(x)\neq f(y)$.
    \begin{proof}
         $d(f(x),f(y))\geq\frac{1}{a}d(x,y)-b$ and so if $d(x,y)>a+ab$ we have  $d(f(x),f(y))\geq\frac{a+ab}{a}-b=1$ and thus $f(x)\neq f(y)$.
           
    \end{proof}
     \begin{Corollary}\label{Corallary:N(a+ab)-disjoint}
        For any two vertices $x,y$ that are $N(a+ab)$-disjoint, $d(f(x),f(y))\geq N$. 
    \end{Corollary}
    \item  If $G$ is a graph that is not $M$-fat $1$-bottlenecked for $M>2(a+b)(a+ab)$ then $G'$ is not $1$-edge bottlenecked.

    \begin{proof}
        Let $G$ be a graph that is not $2(a+b)(a+ab)$-fat $1$-bottlenecked. This gives that there are connected sets $X,Y\subset G$ with $d(X,Y)>2(a+b)(a+ab)$ such that there are at least two $2(a+b)(a+ab)$-disjoint paths $P_1,P_2$ between $X$ and $Y$. Now corollary \ref{Corallary:N(a+ab)-disjoint} gives that $$N_{a+b}(f(X))\cap N_{a+b}(f(Y))=\emptyset =N_{a+b}(f(P_1))\cap N_{a+b}(f(P_2)).$$
            While Corollary \ref{Corallary:(a+b)connected} gives that $N_{a+b}(f(P_1))$ and $N_{a+b}(f(P_2))$ both contain a path between $N_{a+b}(f(X))$ and $N_{a+b}(f(Y))$.
        This implies that $G'$ is not $1$-edge bottlenecked.
    \end{proof}    
    \begin{Corollary}\label{Q_BN=Fat_BN}
        If a graph $G$ is quasi $n$-edge bottlenecked then $G$ is $m$-fat $n$-bottlenecked for some $m$. 
    \end{Corollary}
\end{enumerate}
\end{Lemma}

\section{Coarse skeletons of a graph}\label{Section:CoarseSkel}

\begin{Definition}\textbf{(Coarse Skeleton, Layers, Blocks, $G_{\lambda,k}$)}
\label{definition: skeleton}
    For any connected graph $G$, we define a ``Skeleton" construction $G_{\lambda,k}$ to capture large-scale properties of $G$. To form this construction, we first pick a vertex $x_0$ in $G$ to act as a ``root" and fix two constants, a ``scale" $\lambda\geq 1$, and ``connectivity" $k\geq 1$. We will often pick one constant as a function of the other. The scale divides the graph into annuli of radius $\lambda$ around the root. We call these the ``layers" where the $N$th layer is the set $A_{N,\lambda} := \{x\in V(G) \mid N\lambda<d(x,x_0)\leq (N+1)\lambda\}$. The connectivity partitions each layer into maximal $k$-connected sets. We call these the ``blocks". We now define $G_{\lambda,k}$ to have a vertex identified with each block, and an edge between two vertices of $G_{\lambda,k}$ if there is an edge between the blocks in $G$. We will often assume a natural function $f:G\to G_{\lambda,k}$ defined by taking each vertex of $G$ to the vertex of $G_{\lambda,k}$ corresponding to its block. See Figure \ref{An sets} for a visual representation of one such construction.
\end{Definition}

\begin{Remark}\label{remark: skeleton facts}
    There are several facts that can be observed about a skeleton of a graph.
    \begin{enumerate}
        \item Skeleton $G_{\lambda,k}$ is a bipartite graph, as every edge connects from one layer to a consecutive one.
        \item Skeleton $G_{\lambda,k}$ is a connected graph, as every vertex has a path to the root.
        \item Skeleton $G_{\lambda,k}$ is a simple graph, as we only create at most one edge between two blocks.
        \item All edges in $G_{\lambda,k}$ come from at least one edge in G.
        \item All vertices in $G_{\lambda,k}$ come from at least one vertex in G.
    \end{enumerate}
\end{Remark}

\begin{figure}
    \centering
    \includegraphics[width=0.75\linewidth]{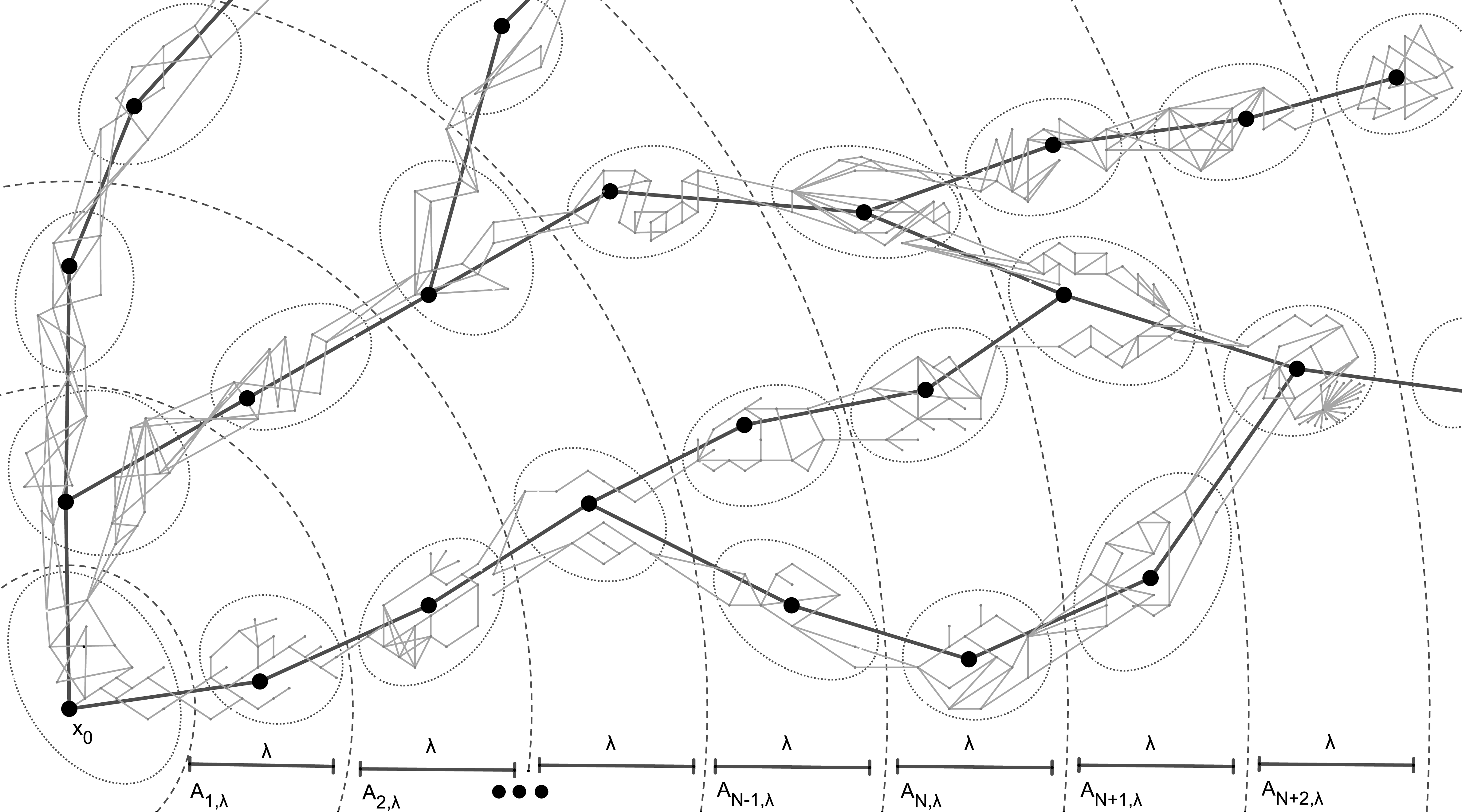}
    \caption{(See definition \ref{definition: skeleton})
    Graphical depiction of a skeleton construction from a graph $G$ (in grey). layers $\lambda$ from the root $x_0$ are shown as dashed arcs and k-connected block are surrounded by ovals. vertices and edges of $G_{N,\lambda}$ ares shown as black with one vertexes associated with each block and an edge between blocks if there is a corresponding edge in $G$. Note that blocks are not necessarily connected. }
    \label{An sets}
\end{figure}

\begin{Lemma}\label{uniformly bounded = quasi-isometry}
    The natural function $f:G\to G_{\lambda,k}$ is a quasi-isometry iff the blocks are uniformly bounded.
\end{Lemma}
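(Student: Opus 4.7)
The plan is to split the biconditional into its two implications: ($\Rightarrow$) an easy contrapositive, and ($\Leftarrow$) a direct computation of quasi-isometry constants from a uniform bound on the $G$-diameters of the blocks.

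For the forward direction I would assume that the blocks are \emph{not} uniformly bounded and produce, for every $D\in\mathbb{N}$, a pair $x_D,y_D$ lying in a common block with $d_G(x_D,y_D)\geq D$. Since $f$ collapses each block to a single vertex we have $d_{G_{\lambda,k}}(f(x_D),f(y_D))=0$, so any proposed quasi-isometric lower inequality $\tfrac{1}{a}d_G(x_D,y_D)-b\leq 0$ forces $d_G(x_D,y_D)\leq ab$, contradicting the fact that $D$ is arbitrary.

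For the reverse direction fix a uniform bound $D$ on the $G$-diameter of every block. I would verify three things. First, coarse surjectivity is immediate from Remark \ref{remark: skeleton facts}(5), since every vertex of $G_{\lambda,k}$ comes from a nonempty block. Second, the upper bound $d_{G_{\lambda,k}}(f(x),f(y))\leq d_G(x,y)$ follows by walking along a $G$-geodesic $x=v_0,\ldots,v_n=y$: each edge $v_iv_{i+1}$ either lies inside a single block (so $f(v_i)=f(v_{i+1})$) or crosses between two blocks (yielding an edge of $G_{\lambda,k}$ by definition). Third, for the lower bound I would lift a $G_{\lambda,k}$-geodesic $f(x)=B_0,B_1,\ldots,B_m=f(y)$ to a $G$-path by picking, for each $i$, an inter-block edge $u_iw_{i+1}$ with $u_i\in B_i$, $w_{i+1}\in B_{i+1}$ realizing the edge $B_iB_{i+1}$, and concatenating in-block $G$-paths of length $\leq D$ between $w_i$ and $u_i$ (and from $x$ to $u_0$, $w_m$ to $y$). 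The resulting $G$-path has length at most $(m+1)D+m$, giving $d_G(x,y)\leq (D+1)\,d_{G_{\lambda,k}}(f(x),f(y))+D$ and hence a $(D+1,D)$-quasi-isometry.

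The main obstacle is essentially pinning down the right reading of ``uniformly bounded''—a single constant $D$ bounding the $G$-diameter of every block—after which both directions are routine bookkeeping. A minor subtlety worth flagging is that the in-block connecting paths in the lifting step are taken as $G$-paths rather than paths inside the block: a block is only $k$-connected as a subset of $V(G)$ and need not induce a connected subgraph, so the bound on $G$-diameter (and \emph{not} on intrinsic block-diameter) is precisely what the argument consumes.
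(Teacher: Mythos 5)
Your proof is correct and the substantive direction ($\Leftarrow$) follows essentially the same route as the paper: the upper bound $d(f(x),f(y))\leq d(x,y)$ because every edge of $G_{\lambda,k}$ comes from an edge of $G$, and the lower bound by lifting a skeleton geodesic using the uniform bound on block diameters (the paper gets $d(x,y)\leq Md(f(x),f(y))+2M$, matching your $(D+1)m+D$ up to constants). You also supply the ($\Rightarrow$) direction, which the paper's proof omits entirely, and your remark that ``uniformly bounded'' must mean bounded $G$-diameter (since a block need not induce a connected subgraph) is a worthwhile clarification the paper leaves implicit.
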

\begin{proof}
    Let $G$ be a graph and $G_{\lambda,k}$ be constructed as defined. Assume that the blocks are uniformly bounded by some $M\in\mathbb{N}$. For any $x,y$ vertices in $G$, we have $d(f(x),f(y))\leq d(x,y)$, as every edge in $G_{\lambda,k}$ comes from an edge in $G$. There is a path in $G_{\lambda,k}$ of length $d(f(x),f(y))$. Each edge in $G_{\lambda,k}$ comes from an edge in $G$ and consecutive edges in $G_{\lambda,k}$ coming from two edges in $G$ with one endpoint inside of the same block bounded by $M$. Thus, we have $d(x,y)\leq M d(f(x),f(y))+2M$ meaning $f$ is a quasi-isometry. 
\end{proof}

\begin{Theorem}
    If a graph $G$ is $M$-fat $n$-bottlenecked for some $M$ and $n$. Then $f:G\to G_{M,M}$ is a quasi-isometry.
    \label{if BN then QI}
\end{Theorem}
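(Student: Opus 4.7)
The plan is to apply Lemma \ref{uniformly bounded = quasi-isometry}, which reduces the theorem to showing that the blocks of $G_{M,M}$ have uniformly bounded $G$-diameter. I would proceed by contrapositive: assuming some block has diameter exceeding a threshold $K = K(M,n)$, I would produce a pair of connected, $M$-disjoint subgraphs whose $X$-$Y$ paths cannot all be blocked by any $n$-element set, contradicting $M$-fat $n$-bottlenecking.

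Suppose $B \subset A_{N,M}$ is a block containing two vertices $u,v$ with $d_G(u,v) > K$. By the $M$-connectedness of $B$, there is an $M$-chain $u = w_0, w_1, \ldots, w_\ell = v$ inside $B$, with $\ell \ge K/M$. The map $j \mapsto d_G(u, w_j)$ is $M$-Lipschitz and reaches at least $K$, so for $K$ large enough I can pick indices $0 = j_0 < j_1 < \cdots < j_n = \ell$ yielding $n+1$ checkpoints $y_i := w_{j_i}$ whose pairwise $G$-distances all exceed $3M$ (this needs roughly $K > 3(n+1)M$).

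Next I take $X = \{u\}$ and $Y = \{v\}$ — two singletons, hence connected, and $M$-disjoint since $d(u,v)>K>M$. Bottlenecking yields some $S \subset V(G)\setminus\{u,v\}$ with $|S|=n$ such that $N_M(S)$ intersects every $u$-$v$ path. Because the $n+1$ checkpoints $y_i$ are pairwise more than $3M$ apart, no single ball $N_M(s)$ of radius $M$ can contain two of them; so the $n$ balls collectively cover at most $n$ checkpoints, leaving some $y_{i^*}$ with $d(y_{i^*}, S) > M$.

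The remaining, and hardest, step is to convert the existence of a single "safe" checkpoint into an actual $u$-$v$ path avoiding $N_M(S)$. The chain walk through all of $w_0, \ldots, w_\ell$ is not itself enough, since it can pass through $N_M(S)$ at other checkpoints or at intermediate geodesic vertices. My intended workaround is to thicken the checkpoint argument: partition the chain into $2n+1$ consecutive subchains of chain-length proportional to $K/(Mn)$ and $G$-length bounded below by a controllable constant, then observe via the triangle inequality that each ball $N_M(s)$ can overlap at most a bounded number of subchains. Pigeonholing over $2n+1$ subchains versus $n$ balls forces at least one entire subchain (in fact, several) to lie outside $N_M(S)$; combining such $S$-free subchains with splices using the $M$-connectedness of $B$ produces a $u$-$v$ walk outside $N_M(S)$, hence a path, contradicting the bottleneck. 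The main bookkeeping obstacle is calibrating $K$ as a function of $M$ and $n$ so the splicing and pigeonholing actually go through.
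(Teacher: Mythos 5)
Your reduction to uniform boundedness of blocks via Lemma \ref{uniformly bounded = quasi-isometry}, and your extraction of $n+1$ pairwise well-separated checkpoints from an $M$-chain in an oversized block, match the paper's opening moves. But the argument diverges at the choice of the two $M$-disjoint connected sets, and the step you flag as ``the hardest'' is not a bookkeeping obstacle --- it is a genuine gap. With $X=\{u\}$ and $Y=\{v\}$, your contradiction requires exhibiting an actual $u$--$v$ path in $G$ avoiding $N_M(S)$, and the only material you have for building one is the $M$-chain inside the block $B$ plus further $M$-chains from the $M$-connectedness of $B$. The pigeonhole does give you subchains disjoint from $N_M(S)$, but to splice two free subchains together you must cross the region where a ball sits, and $M$-connectedness gives you no control over whether $B\setminus N_M(S)$ admits such a detour: if $B$ is essentially a long path winding circumferentially through the annulus, a single ball of radius $M$ genuinely severs it, every splice must pass through that ball, and no amount of recalibrating $K$ fixes this. (Any escape route would have to leave $B$ entirely, and nothing in your setup produces one.)

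The paper sidesteps this by never trying to route a path \emph{through} the block. It takes the two $M$-disjoint connected sets to be the block $a$ itself and a ball around the root of radius $d(a,x_0)-3M$, and the $n+1$ competing $X,Y$ paths to be initial segments of geodesics from the checkpoints $x_1,\dots,x_{n+1}$ toward the root. These paths exist automatically (every vertex has a geodesic to $x_0$), each has length only $3M$, and since the checkpoints are pairwise $7M$-disjoint and each path moves at most $3M$ from its start, the paths stay pairwise far apart; hence no set of $n$ balls $N_M(s)$ can meet all $n+1$ of them, contradicting $M$-fat $n$-bottlenecking directly. The moral difference is that the paper's paths leave the annulus radially, where disjointness is free, whereas your construction stays inside the annulus, where disjointness from $S$ is exactly what cannot be guaranteed. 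If you replace your singleton sets with the block and the root ball and your subchains with these radial geodesics, your checkpoint-separation computation carries over and the proof closes.
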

  
\begin{Corollary}
    A coarsely bottlenecked graph has asymptotic dimension 1 (see \cite{BD08} for information on asymptotic dimension).
    The converse is not true.
\end{Corollary}
\begin{proof}(See Figure \ref{bounded_sets})
    Let $G$ be a $M$-fat $n$-bottlenecked graph for some fixed $M,n\in\mathbb{N}$. By Lemma \ref{uniformly bounded = quasi-isometry}, if all the blocks are uniformly bounded then $f$ is a quasi-isometry, so assume to a contradiction that for any bound there is some block with diameter greater than it. Let $B$ be one such block with diameter larger than $10M(n+1)$. As the diameter is greater than $10M(n+1)$ there are vertices $x_1,x_{n+1}\in B$ such that $d(x_1,x_{n+1})>10M(n+1)$. As $B$ is $M$-connected this gives vertices $x_2,x_3,x_4...x_n \in B$ such that all are more than $7M$-disjoint. Following  paths from each $x_i$ to the root for $3M$ gives $n+1$ $M$-disjoint paths between $B$ and a  $(d(B,x_0)-3M)$-neighborhood around the root. This gives two $M$-disjoint sets connected by $n+1$ $M$-disjoint paths that cannot all be intersected by the $M$-neighborhood around a set of a set of $n$ vertices, producing a contradiction. 
\end{proof}
\begin{Corollary}\label{QI_Contition}
    If a graph $G$ is coarsely bottlenecked, then $f:G\to G_{\lambda,k}$ is a quasi-isometry for all $\lambda$ and all $k$.
\end{Corollary}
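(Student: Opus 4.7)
The plan is to reduce the corollary to Lemma \ref{uniformly bounded = quasi-isometry}: it suffices to show that the blocks of $G_{\lambda,k}$ are uniformly bounded for every choice of $\lambda, k \geq 1$. Fix $M$ and $n$ so that $G$ is $M$-fat $n$-bottlenecked, and aim for a bound $C = C(\lambda, k, M, n)$ valid across all blocks.

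Suppose toward contradiction that some block $a$ lying in an annulus $A_{N,\lambda}$ has diameter larger than a threshold $C$, to be specified. I would first use the $k$-connectivity of $a$ to extract $n+1$ points $x_1,\ldots,x_{n+1}\in a$ with pairwise $G$-distances at least some constant $R$: any $k$-chain realizing the diameter has length at least $C/k$, and walking along the chain one selects samples whose $G$-distance from the start grows monotonically (up to error $k$), giving $n+1$ points at pairwise $G$-distance $\geq C/n - k$. Next, from each $x_i$ take a geodesic of length roughly $\lambda + M + k$ running inward toward the root $x_0$; each such geodesic terminates in a vertex $y_i$ of the ball $B = B(x_0, N\lambda - M - k - 1)$. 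For $R$ chosen large enough in terms of $\lambda, M, k$, these geodesics are pairwise $M$-disjoint. Meanwhile, by thickening $a$ into a genuine connected subgraph $\widetilde{a}$ (formed by splicing $G$-geodesics between successive points of the $k$-chain), the set $\widetilde{a}$ remains $M$-disjoint from $B$ because every point of $\widetilde{a}$ lies within $k$ of some point of $a$ in the annulus. This produces two connected $M$-disjoint subgraphs joined by $n+1$ pairwise $M$-disjoint paths, contradicting the $M$-fat $n$-bottlenecking hypothesis, since no collection of $n$ vertices in $V(G)\setminus(V(\widetilde{a})\cup V(B))$ can have its $M$-neighborhoods meet every such path.

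The corner case is blocks lying in annuli close enough to the root that $N\lambda - M - k - 1 \leq 0$; here the argument above fails, but the entire annulus $A_{N,\lambda}$ already has diameter at most $2(N+1)\lambda \leq 2(M+k+\lambda)$, and so these blocks are automatically bounded. Combining the two cases yields a uniform bound on block diameter, and Lemma \ref{uniformly bounded = quasi-isometry} finishes the proof. The main obstacle I anticipate is the constant bookkeeping: one must coordinate the choices of $R$ and $C$ so that the pairwise spacing of the $x_i$ dominates the combined length of the radial geodesics, and so that the thickening step $a \rightsquigarrow \widetilde{a}$ does not collapse the $M$-gap to the inner ball. This is routine but has enough moving parts that it merits careful execution.
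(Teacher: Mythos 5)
Your proposal is correct and follows essentially the same route as the paper: the paper proves the case $\lambda=k=M$ in Theorem \ref{if BN then QI} by exactly this contradiction (a block of large diameter yields $n+1$ well-separated points whose radial paths toward a ball about the root violate $M$-fat $n$-bottlenecking), and then invokes Lemma \ref{uniformly bounded = quasi-isometry}. Your version simply carries the general constants $\lambda,k$ through that argument; the one bookkeeping point worth fixing is that the $n+1$ paths should be made pairwise $2M$-disjoint rather than merely $M$-disjoint, so that no single $M$-neighborhood of a vertex can meet two of them.
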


\begin{figure}
    \begin{center}
\includegraphics[width=0.75\linewidth]{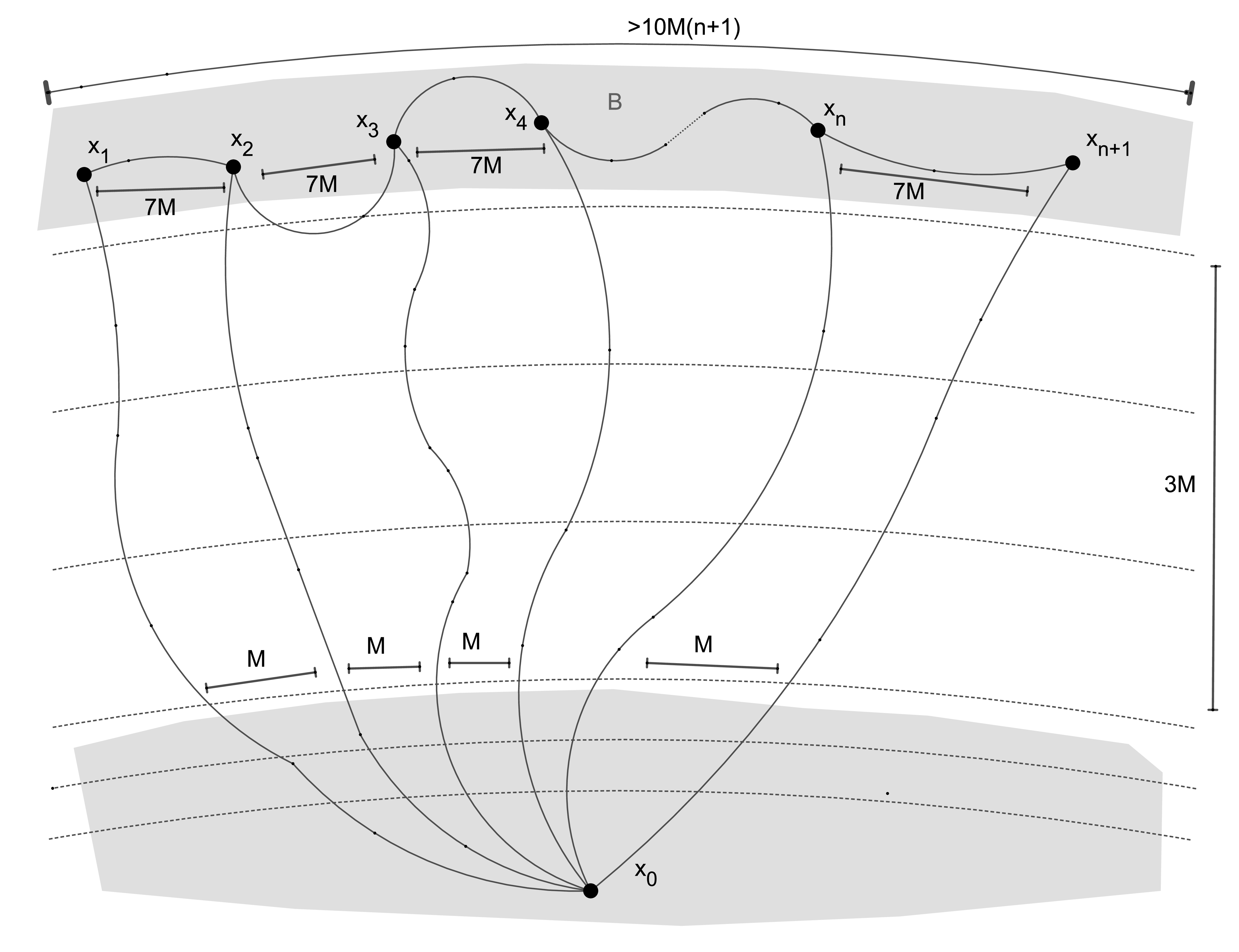}
    \end{center}
    \caption{ Picture proof of Theorem \ref{if BN then QI} showing that an unbounded block creates a not coarsely bottlenecked structure. Block B and the $(d(B,x_0)-3M)$-neighborhood around the root $x_0$ are the shaded areas. As $x_1,x_2,x_3,x_4...x_n, x_(n+1)$ are all pairwise $7M$-disjoint, their $3M$ paths to the root always remain at least $M$-disjoint.}
    \label{bounded_sets}
\end{figure}

\begin{Lemma}\label{Lemma:NoEdgeGivesDisjoint}
    If two distinct vertices in $G_{\lambda,k}$ are not connected by an edge then their preimages in $G$ are at least $M=\min\{\lambda,k\}$-disjoint. 
\end{Lemma}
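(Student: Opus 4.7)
The plan is to argue by contradiction: suppose $u \neq v$ are vertices of $G_{\lambda,k}$ with preimages $B_u = f^{-1}(u)$ and $B_v = f^{-1}(v)$ (the corresponding blocks), and suppose there exist $x \in B_u$, $y \in B_v$ with $d(x,y) \leq M = \min\{\lambda,k\}$. I will derive that this forces either $B_u = B_v$ or an edge between $u$ and $v$ in $G_{\lambda,k}$, each of which contradicts the hypotheses. The argument splits by the layer indices $N_u, N_v$ of the two blocks.

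First I would dispense with the easy cases. If $|N_u - N_v| \geq 2$, then the triangle inequality gives $d(x,y) \geq |d(x,x_0) - d(y,x_0)| > \lambda \geq M$ for \emph{every} $x \in B_u$, $y \in B_v$, so this case cannot occur. If $N_u = N_v$, then $x$ and $y$ lie in a common layer with $d(x,y) \leq M \leq k$, so the two-point sequence $x,y$ witnesses that $\{x,y\}$ is $k$-connected in that layer; splicing this with the $k$-connecting sequences of $B_u$ shows that $B_u \cup \{y\}$ is $k$-connected, so by maximality $y \in B_u$. Since two distinct maximal $k$-connected subsets of a layer must be disjoint (their union would otherwise be $k$-connected and strictly larger than each), this forces $B_u = B_v$, a contradiction.

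The main case is $|N_u - N_v| = 1$, say $N_v = N_u + 1$ with $N = N_u$. I would take a geodesic $x = z_0, z_1, \ldots, z_j = y$ in $G$, so $j = d(x,y) \leq M$. The first observation is that $\lambda \geq 1$, combined with unit-length edges, forces the layer index of consecutive $z_i$'s to differ by at most $1$, since $|d(z_i,x_0) - d(z_{i+1},x_0)| \leq 1 \leq \lambda$. Set $i^{\ast} = \max\{i : z_i \text{ lies in a layer of index} \leq N\}$, which satisfies $0 \leq i^{\ast} < j$; the one-step bound then forces $z_{i^{\ast}}$ to be in layer $N$ and $z_{i^{\ast}+1}$ to be in layer $N+1$, joined by a $G$-edge. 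The second observation is that $d(x, z_{i^{\ast}}) = i^{\ast} \leq M - 1 \leq k$, and $x, z_{i^{\ast}}$ both lie in layer $N$, so the two-point sequence $x, z_{i^{\ast}}$, spliced with $B_u$'s own sequences, shows $z_{i^{\ast}} \in B_u$ by maximality; symmetrically $z_{i^{\ast}+1} \in B_v$. The edge $z_{i^{\ast}} z_{i^{\ast}+1}$ then yields a forbidden edge between $u$ and $v$ in $G_{\lambda,k}$.

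The hard part will be the careful bookkeeping in the consecutive-layer case: one must guarantee that the chosen crossing step lands with $z_{i^{\ast}}$ in layer exactly $N$ (rather than some lower layer), which is precisely where the standing hypothesis $\lambda \geq 1$ does essential work by preventing the geodesic from skipping layer $N$ altogether. Once this step-bound is in hand, the rest of the argument is a direct unwinding of the definitions of block, layer, and $k$-connected set.
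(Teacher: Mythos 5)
Your proof is correct and takes essentially the same route as the paper's: argue by contraposition/contradiction, eliminate layer indices differing by $\geq 2$ via the triangle inequality and $\lambda\geq M$, eliminate the equal-layer case via maximality of blocks and $k\geq M$, and in the consecutive-layer case extract a $G$-edge crossing between the two blocks from a short connecting path. Your bookkeeping with the crossing index $i^{\ast}$ is in fact more careful than the paper's corresponding step.
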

\begin{proof}

Let $x,y$ be vertices of $G_{\lambda,k}$ such that $f^{-1}(\{x\})$ and $f^{-1}(\{y\})$ are not $M$-disjoint but with $x \neq y$, we will show that $x$ and $y$ must have an edge between them. As the preimages of $x$ and $y$ are not $\lambda$-disjoint, $x$ and $y$ must either be in the same layer or consecutive layers. As $x$ and $y$ are distinct vertices and their preimages are not $k$-disjoint, $x$ and $y$ can not be in the same layer and thus must be in consecutive layers. As $f^{-1}(\{x\})$ and $f^{-1}(\{y\})$ are not $M$-disjoint there are vertices $x'\in f^{-1}(\{x\})$ and $y'\in f^{-1}(\{y\})$ such that $d_G(x',y')<M$. Thus there must be a $x',y'$ path $P$ that is never $M$-disjoint from $x'$ and $y'$. This gives that $P$ must be contained only in the layers of $x'$ and $y'$, and that in these layers it is contained in $f^{-1}(\{x\})$ and $f^{-1}(\{y\})$. By following this path from $x'$ to the first vertex in $f^{-1}(\{y\})$ it becomes clear that there is an edge between $f^{-1}(\{x\})$ and $f^{-1}(\{y\})$ in $G$ and thus an edge between $x$ and $y$ in $G_{\lambda,k}$.
\end{proof}

\begin{Corollary}
\label{NoRungConnectionsGivesMdisjoint}
    Let $G_{\lambda,k}$ be a skeleton of a graph with paths $P_1,P_2$. If $d(f(x),f(y))>1$ for all $f(x)\in P_1,f(y)\in P_2$, then for $M=\min\{\lambda,k\}$ the half $M$ neighborhoods of the preimages do not intersect. That is $N_\frac{M}{2} f^{-1}(P_1)\cap N_\frac{M}{2} f^{-1}(P_2)=\emptyset$.
\end{Corollary}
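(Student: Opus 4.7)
The plan is to reduce this to a pointwise application of Lemma \ref{Lemma:NoEdgeGivesDisjoint} and then close with the triangle inequality; nothing new is needed beyond what the lemma already supplies.

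First I would observe that for every vertex $f(x)\in P_1$ and every $f(y)\in P_2$, the hypothesis $d(f(x),f(y))>1$ says these skeleton vertices are distinct and carry no edge between them in $G_{\lambda,k}$. Applying Lemma \ref{Lemma:NoEdgeGivesDisjoint} (with $M=\min\{\lambda,k\}$) then gives that the corresponding blocks $f^{-1}(f(x))$ and $f^{-1}(f(y))$ are $M$-disjoint in $G$. So from the path-level hypothesis I extract a family of pairwise $M$-disjointness statements, one for every choice of vertex in $P_1$ against every vertex in $P_2$.

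Next, to upgrade these pointwise block disjointnesses to the desired neighborhood disjointness, I would argue by contradiction. Suppose some $z\in V(G)$ lies in both $N_{M/2}(f^{-1}(P_1))$ and $N_{M/2}(f^{-1}(P_2))$. Unpacking the definition of the $M/2$-neighborhood yields vertices $x'\in f^{-1}(P_1)$ and $y'\in f^{-1}(P_2)$ with $d(z,x')<M/2$ and $d(z,y')<M/2$, hence $d(x',y')<M$ by the triangle inequality. But the previous step, applied to $f(x')\in P_1$ and $f(y')\in P_2$, forces the blocks containing $x'$ and $y'$ to be $M$-disjoint, i.e.\ $d(x',y')>M$, a contradiction.

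I do not anticipate any real obstacle; the statement is essentially bookkeeping on top of Lemma \ref{Lemma:NoEdgeGivesDisjoint}. The one small point to monitor is the interplay of strict inequalities in the paper's conventions: $M$-disjoint uses $d>M$ and $N_M$ uses $d<M$, so the two strict bounds of $M/2$ combine cleanly to $d(x',y')<M$, which is the right form to contradict $d(x',y')>M$. The threshold $M/2$ in the statement is exactly what makes this triangle-inequality step work with room to spare.
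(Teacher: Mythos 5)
Your proposal is correct and is exactly the intended derivation: the paper states this as an immediate corollary of Lemma \ref{Lemma:NoEdgeGivesDisjoint} without writing out a proof, and the argument it has in mind is precisely your pairwise application of the lemma (distance $>1$ in the skeleton means distinct and non-adjacent, hence $M$-disjoint blocks) followed by the triangle inequality on the two strict $M/2$ bounds. Your attention to the strict-inequality conventions ($>M$ for disjointness versus $<M$ for neighborhoods) is the only point of care, and you handled it correctly.
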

\begin{Corollary}\label{Lemma:N disjoint gives M(N-1) disjoint}
    If two vertices $x,y\in G_{\lambda,k}$ with $d(x,y)\geq N$ must have preimages in $G$ that are at least $M(N-1)$-disjoint for $M=\min\{\lambda,k\}$.
\end{Corollary}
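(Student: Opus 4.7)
The plan is to deduce the bound directly from Lemma \ref{Lemma:NoEdgeGivesDisjoint} by sampling a geodesic in $G$ at scale $M$ and pushing it down through $f$ to obtain a short walk in $G_{\lambda,k}$. To set up, fix arbitrary $x' \in f^{-1}(x)$ and $y' \in f^{-1}(y)$, let $v_0 = x', v_1, \ldots, v_L = y'$ be a geodesic in $G$, and aim to show $L > M(N-1)$.

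The key amplification of Lemma \ref{Lemma:NoEdgeGivesDisjoint} I would extract is the following: for any $u, w \in V(G)$ with $d_G(u, w) \leq M$, the images $f(u)$ and $f(w)$ are either equal or adjacent in $G_{\lambda,k}$, since otherwise the lemma would force $f^{-1}(f(u))$ and $f^{-1}(f(w))$ to be $M$-disjoint, contradicting $d_G(u,w) \leq M$.

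Next, apply this to the sampled sequence $v_0, v_M, v_{2M}, \ldots, v_{kM}, v_L$ where $k = \lfloor L/M \rfloor$. Consecutive sample points are at $G$-distance at most $M$ (since subpaths of geodesics are geodesics), so their $f$-images form a walk in $G_{\lambda,k}$ from $x$ to $y$ using at most $\lceil L/M \rceil$ edges. Combined with the hypothesis $d_{G_{\lambda,k}}(x,y) \geq N$ this gives $\lceil L/M \rceil \geq N$, and integrality of $N$ forces $L > M(N-1)$. Since $x'$ and $y'$ were arbitrary, the preimages of $x$ and $y$ are $M(N-1)$-disjoint, as required.

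I do not anticipate a serious obstacle: the corollary is essentially an iteration of Lemma \ref{Lemma:NoEdgeGivesDisjoint} along a geodesic. The only subtle point is the off-by-one between $L/M \geq N - 1$ and the strict inequality $L > M(N-1)$ needed for $M(N-1)$-disjointness, which is absorbed cleanly by the ceiling function together with $N \in \mathbb{N}$.
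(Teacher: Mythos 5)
Your argument is correct and matches the paper's intent: the corollary is stated without proof precisely because it follows from Lemma \ref{Lemma:NoEdgeGivesDisjoint} by the iteration you describe, namely sampling a geodesic between the preimages at scale $M$ so that consecutive samples map to equal or adjacent vertices of $G_{\lambda,k}$. Your handling of the off-by-one via $\lceil L/M\rceil \geq N \Rightarrow L > M(N-1)$ is exactly the right bookkeeping for the strict inequality in the definition of $M(N-1)$-disjointness.
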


\subsection{Example use of Skeletons: Quasi Trees}
\begin{Remark}
    This equivalence was first shown by \cite{M05} using a condition that is equivalent to $1$-coarse bottlenecking, \cite{GP23} provided a simple proof using effectively the same techniques we demonstrate here. Our contribution here is to present this proof in the formalised language of skeletons. 
\end{Remark} 
\begin{Proposition}
    
(See also \cite{GP23},\cite{M05}) The following are equivalent:
\begin{enumerate}
    \item A graph $G$ is $M$-fat $1$-bottlenecked for some $M\in\mathbb{N}$.
    \item For $m>M$, $G_{m,m}$ is $1$-edge bottlenecked.
    \item $G$ is a quasi-tree, as it is quasi-isometrically equivalent to a tree graph. 
\end{enumerate}    
\begin{proof}{$1\implies 2$}
    Let $G$ be a $M$-fat $1$-bottlenecked for some $M\in\mathbb{N}$. Fix $m>M$ and assume to a contradiction that $G_{m,m}$ is not $1$-edge bottlenecked. This gives that there are $x\neq y \in  G_{m,m}$ such that there are two internally disjoint $x,y$ paths in $G_{m,m}$. Note as $G_{m,m}$ is bipartite, every cycle must have two vertices in the same layer. Thus we may consider an $x\neq y$ pair that lie in the same layer meaning their preimages are not $m$-connected.
    Now both of the $x,y$ paths gives at least one path in $G$. As $G$ is $m$-fat $1$-bottlenecked these paths are not $m$-disjoint in $G$. Thus there must be an edge between them in $G_{m,m}$. Taking $y'$ to be the closest endpoint of such an edge to $x$, gives a pair of disjoint $x,y'$ paths with no edges between them. Now the paths given in $G$ produce $m$-disjoint sets that connect $f^{-1}(\{x\}),f^{-1}(\{y'\})$. This is a contradiction and thus $G_{m,m}$ is $1$-edge bottlenecked.
    
{$2\implies 3$}

     Corollary \ref{QI_Contition} gives that the function $f:G\to G_{m,m}$ is a quasi-isometry. As $G_{m,m}$ is $1$-edge bottlenecked it must be a tree. Thus $G$ is a quasi-tree. 
     
{$3\implies 1$}

If a graph $G$ is a quasi-tree there exists a tree $G'$ and a quasi-isometry $f:G\to G'$ with constants $a,b$. Thus by Corollary \ref{Q_BN=Fat_BN} $G$ is $M$-fat $1$-bottlenecked for some $M$.
\end{proof}
\end{Proposition}

\begin{Remark}
    For more results relating to the properties of coarsely bottlenecked graphs please see \cite{BMS}. Where we explore bottlenecking in much more detail.
\end{Remark}

 \subsection{Composition of Skeletons, and Diminishing Minors}

\begin{Proposition}\label{Proposition:CompositionOfSkeletons}
Fixing $x_0$ and taking a $(N,1)$-skeleton of a $(\lambda,k)$-skeleton of a graph is equivalent to taking a $(N\lambda,k)$-skeleton of that graph 
    $G_{(N\lambda,k)}=G_{(\lambda,k)_{(N,1)}}$ for $N\in\mathbb{N}$.
\end{Proposition}

\begin{proof}
    Let $f:G\to G_{\lambda,k}, \phi:G\to G_{N\lambda,k}$ and $\Phi:G\to G_{(\lambda,k)_{(N,1)}}$ be the natural functions found by mapping a block in one to its associated vertex in the other. Let $x,y$ be vertices in $G$ we will show that $\phi(x)=\phi(y) \iff \Phi(x)=\Phi(y)$.
    
    If $\phi(x)=\phi(y)$ then $x$ and $y$ are both in the same $N\lambda$-layer, and are $k$-connected within the layer. This means that $f(x)$ and $f(y)$ are connected in an $N$-layer of $G_{(\lambda,k)}$ and so $\Phi(x)=\Phi(y)$.

    If $\Phi(x)=\Phi(y)$ then $f(x)$ and $f(y)$ are connected in a $N$-layer of $G_{(\lambda,k)}$ and so $x$ and $y$ are $k$-connected in some $N\lambda$-layer of $G$, so $\phi(x)=\phi(y)$.

    As the vertex sets of these graphs are the same, and all edges are determined by the edges between vertex sets, we have shown that these graphs are the same.
\end{proof}

\begin{Remark}
    The above Proposition \ref{Proposition:CompositionOfSkeletons} shows an interesting relation between the "scales" of skeleton compositions. Unfortunately a similar result does not hold for "connectivities".
\end{Remark}

\begin{Lemma}[Small Contraction Lemma ]\label{Lemma: SmallContraction}
    Vertices $2$-disjoint in $G_{(\lambda,k)_{(2,2)}}$ must come from sets that are $3$-disjoint in $G_{\lambda,k}$.
\end{Lemma}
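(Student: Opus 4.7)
The plan is to prove the contrapositive: given vertices $u, v \in G_{(\lambda,k)_{(2,2)}}$ whose preimages meet within distance $3$ in $G_{\lambda,k}$ -- that is, there exist $x$ in the preimage of $u$ and $y$ in the preimage of $v$ with $d_{G_{\lambda,k}}(x,y) \leq 3$ -- show that $d(u,v) \leq 2$ in $G_{(\lambda,k)_{(2,2)}}$. The natural block-collapsing map $g: G_{\lambda,k} \to G_{(\lambda,k)_{(2,2)}}$ is $1$-Lipschitz, since each edge of $G_{\lambda,k}$ either collapses (when its endpoints share a block) or projects to an edge of the outer skeleton. So the case $d(x,y) \leq 2$ already gives $d(u,v) \leq 2$, and only $d(x,y) = 3$ requires work.

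For the remaining case, take a geodesic $x = z_0, z_1, z_2, z_3 = y$ in $G_{\lambda,k}$ and let $L_i$ denote the outer layer of $z_i$ (layers have width $2$ in the outer skeleton). Two constraints drive the argument. First, $|L_i - L_{i+1}| \leq 1$, because adjacency in $G_{\lambda,k}$ shifts the root-distance by at most $1$ while the outer layers have width $2$. Second, $|L_i - L_{i+2}| \leq 1$, because $d(z_i, z_{i+2}) \leq 2$ cannot bridge two full layers of width $2$. The pivotal structural fact I will use is that any two vertices lying in the same outer layer and at distance at most $2$ in $G_{\lambda,k}$ must lie in a common block, since blocks are the maximal $2$-connected subsets of each layer.

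Now suppose for contradiction $d(u,v) = 3$. The walk $g(z_0), g(z_1), g(z_2), g(z_3)$ from $u$ to $v$ has length at most $3$ in $G_{(\lambda,k)_{(2,2)}}$, so it must realize the distance. A short check (any repetition among the $g(z_i)$ either trivializes a step or produces a shortcut) forces the four $g(z_i)$ to be pairwise distinct. In particular $g(z_0) \neq g(z_2)$, which by the structural fact rules out $L_0 = L_2$. The only remaining possibility is $|L_0 - L_2| = 1$, say $L_2 = L_0 + 1$; then the two layer constraints pin the middle layer to
$$L_1 \in \{L_0-1,L_0,L_0+1\} \cap \{L_0,L_0+1,L_0+2\} = \{L_0, L_2\}.$$
In either subcase $z_1$ shares a layer with an adjacent neighbor (either $z_0$ or $z_2$) at distance $1$, hence shares a block with it, contradicting the distinctness of the $g(z_i)$.

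The only subtle point I anticipate is the observation $|L_i - L_{i+2}| \leq 1$, which is exactly where the outer scale being $2$ (matching the two-edge bound $d(z_i,z_{i+2}) \leq 2$) is essential. Once this and the ``same layer, distance $\leq 2$, same block'' principle are in hand, the contradiction drops out from a single line of case analysis on the middle layer $L_1$.
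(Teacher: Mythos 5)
Your proof is correct, and it takes a genuinely different (and in my view cleaner) route than the paper's. You prove the contrapositive --- a geodesic of length at most $3$ in $G_{\lambda,k}$ projects under the $1$-Lipschitz block map to a walk of length at most $2$ in $G_{(\lambda,k)_{(2,2)}}$ --- by bookkeeping the outer-layer indices $L_i$ along the geodesic. The paper instead argues directly: it takes a shortest path between the two $2$-disjoint vertices of $G_{(\lambda,k)_{(2,2)}}$, examines two consecutive edges of it, and splits into the cases where the endpoints of that length-$2$ subpath lie in the same outer annulus or have an annulus between them, invoking maximality of the $2$-connected blocks in each case. Both arguments ultimately rest on the same two facts (annuli have width $2$, so vertices two annuli apart are at distance at least $3$; points of the same annulus within distance $2$ lie in a common block), but your version is tighter: the pairwise distinctness of the four projected vertices together with $|L_i-L_{i+1}|\leq 1$ and $|L_i-L_{i+2}|\leq 1$ exhausts all cases, and you never need the paper's ``annulus between them'' step, which is the delicate one --- blocks separated by one annulus can contain points at distance exactly $3$, which does not hurt the lemma only because such blocks are at skeleton distance $2$ and hence not $2$-disjoint, precisely the situation your contrapositive formulation screens out from the start. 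A further small gain of your approach is that it uses nothing about $G_{\lambda,k}$ beyond its being a rooted graph, so it establishes the statement for the $(2,2)$-skeleton of an arbitrary graph rather than only of a graph that is itself a skeleton.
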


\begin{proof}
    All shortest paths between vertices that are $2$-disjoint in $G_{(\lambda,k)_{(2,2)}}$ have at least two consecutive edges forming a path. These edges either cross the same layer or consecutive layers. Edges crossing the same layer place the endpoint vertices of the path in the same layer. Two distinct vertices of $G_{(\lambda,k)_{(2,2)}}$ in the same layer must come from sets that are not $2$-connected in $G_{\lambda,k}$, and must be $3$-disjoint. Consecutive edges crossing subsequent layers places a layer between the endpoints vertices of the path. Two vertices of $G_{(\lambda,k)_{(2,2)}}$ with a layer between them must come from sets that are not $2$-connected $G_{\lambda,k}$, so must be $3$-disjoint.
\end{proof}

\begin{Lemma}[Big Contraction Lemma]\label{Lemma: BigContraction}
    Let $G_{\lambda,k}$ be a skeleton of a graph, if two sets are $n>1$-disjoint in $G_{(\lambda,k)_{(2,2)}}$ then their pre-images in $G_{\lambda,k}$ are at least ($n+\lfloor\frac{n}{2}\rfloor)$-disjoint.
\end{Lemma}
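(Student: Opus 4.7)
The plan is to iterate the Small Contraction Lemma along a geodesic in $G_{\lambda,k}$. Let $\phi : G_{\lambda,k} \to G_{(\lambda,k)_{(2,2)}}$ denote the natural block-projection, suppose $X, Y \subseteq V(G_{(\lambda,k)_{(2,2)}})$ are $n$-disjoint, and fix arbitrary $a' \in \phi^{-1}(X)$, $b' \in \phi^{-1}(Y)$. Setting $\ell = d_{G_{\lambda,k}}(a', b')$, the goal is to prove $\ell > n + n/2$.

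The key reusable ingredient is the contrapositive of Lemma \ref{Lemma: SmallContraction}: any two vertices $u, v \in G_{\lambda,k}$ with $d_{G_{\lambda,k}}(u, v) \leq 3$ must satisfy $d_{G_{(\lambda,k)_{(2,2)}}}(\phi(u), \phi(v)) \leq 2$, since otherwise $\phi(u)$ and $\phi(v)$ would be $2$-disjoint in $G_{(\lambda,k)_{(2,2)}}$ and hence $u, v$ would lie in $3$-disjoint preimages. This is a ``three-to-two'' compression to be applied repeatedly. Fix a geodesic $a' = u_0, u_1, \ldots, u_\ell = b'$ and extract the subsequence $u_0, u_3, u_6, \ldots, u_{3 \lfloor \ell/3 \rfloor}$, followed by $u_\ell$ when $\ell$ is not a multiple of $3$. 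Consecutive terms in this subsequence are at $G_{\lambda,k}$-distance at most $3$ (and at most $\ell \bmod 3$ for the final truncated step), so under $\phi$ they are at $G_{(\lambda,k)_{(2,2)}}$-distance at most $2$. The triangle inequality then gives
$$n < d_{G_{(\lambda,k)_{(2,2)}}}(\phi(a'), \phi(b')) \leq 2 \lfloor \ell/3 \rfloor + (\ell \bmod 3),$$
which rearranges to the claimed bound $\ell > 3n/2$.

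The main obstacle is the elementary but fiddly bookkeeping over the three residues $\ell \bmod 3 \in \{0,1,2\}$ and the parity of $n$. One must verify in each of the six resulting subcases that the displayed integer inequality forces $\ell$ strictly greater than $n + n/2$. For even $n$ the threshold is an integer and one needs $\ell \geq 3n/2 + 1$; for odd $n$ the threshold is a half-integer and one needs only $\ell \geq (3n+1)/2$. A sanity check at $n=2$ recovers $\ell \geq 4$, which matches the Small Contraction Lemma exactly, showing the iteration is tight at the base case. No new geometric input beyond Lemma \ref{Lemma: SmallContraction} is required; the lemma is entirely an amplification by path-chaining.
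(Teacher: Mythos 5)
Your proof is correct and uses the same essential mechanism as the paper: amplifying Lemma \ref{Lemma: SmallContraction} along a path in three-to-two chunks to get the $3n/2$ bound. The only difference is direction of travel --- the paper chops a geodesic in $G_{(\lambda,k)_{(2,2)}}$ into $2$-disjoint consecutive pairs and argues each must cost at least $3$ upstairs, whereas you chop a geodesic in $G_{\lambda,k}$ into triples and project down via the contrapositive of the Small Contraction Lemma; your formulation is the cleaner of the two since it avoids any question of how paths in the quotient lift to $G_{\lambda,k}$.
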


\begin{proof}
      Each shortest path between two n-disjoint sets in $G_{(\lambda,k)_{(2,2)}}$, has at least $n+1$ vertices. Consider alternating vertices along this path and see that there are $\lfloor\frac{n}{2}\rfloor$ number of $2$-disjoint consecutive pairs. Applying lemma \ref{Lemma: SmallContraction} along these paths in $G_{(\lambda,k)_{(2,2)}}$ increases the minimum necessary lengths of the pre-image paths in $G_{\lambda,k}$ by at least $1$ for each such pair. This means $n$-disjoint sets in $G_{(\lambda,k)_{(2,2)}}$ must come from at least $(n+\lfloor\frac{n}{2}\rfloor)$-disjoint sets in $G_{\lambda,k}$. 
\end{proof}

\begin{Lemma}[Diminishing Lemma]\label{Lemma:Diet}
   for $M\geq 4$, If a coarsely bottlenecked graph G has skeleton $G_{\lambda,k}$ such that $G_{(\lambda,k)_{(2,2)}}$ contains a $M$-fat $H$ minor, then $G_{\lambda,k}$ contains a $(M+\lfloor\frac{M}{2}\rfloor-2)$-fat $H$ minor.
\end{Lemma}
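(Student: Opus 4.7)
The plan is to pull the given $M$-fat $H$ minor back from $G_{(\lambda,k)_{(2,2)}}$ to $G_{\lambda,k}$ along the natural quotient map, and then apply the Big Contraction Lemma to translate the disjointness of branch sets in the double skeleton into the desired fatness in the single skeleton.

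Concretely, let $f\colon G_{\lambda,k}\to G_{(\lambda,k)_{(2,2)}}$ be the natural skeleton map and let $\{B_v\}_{v\in V(H)}$ and $\{P_e\}_{e\in E(H)}$ be the branch sets and branch paths realizing the $M$-fat $H$ minor in $G_{(\lambda,k)_{(2,2)}}$. Define $B'_v := f^{-1}(B_v)$ and $P'_e := f^{-1}(P_e)$ in $G_{\lambda,k}$. First I would verify that these pullbacks are connected: each single-vertex preimage is a $2$-connected block of $G_{\lambda,k}$ by construction, and by Remark~\ref{remark: skeleton facts} every edge of the double skeleton comes from an edge in $G_{\lambda,k}$ joining the two preimages, so the preimage of any connected set is connected. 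The same observation shows that the incidence structure of the minor lifts: whenever $P_e$ meets $B_u$ and $B_v$ along edges in $G_{(\lambda,k)_{(2,2)}}$, there are corresponding edges from $P'_e$ to $B'_u$ and $B'_v$ in $G_{\lambda,k}$, so the pulled-back configuration still contracts to $H$.

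Next I would apply Lemma~\ref{Lemma: BigContraction}: any two non-incident branch sets/paths in $G_{(\lambda,k)_{(2,2)}}$ are $M$-disjoint by hypothesis (and $M \geq 4 > 1$, so the lemma applies), so their preimages in $G_{\lambda,k}$ are at least $(M+\tfrac{M}{2})$-disjoint. To obtain an actual fat-minor decomposition one then needs to replace each connected set $P'_e$ by a genuine path inside it that joins $B'_u$ to $B'_v$; the extracted path is contained in $P'_e$, so it inherits the $(M+\tfrac{M}{2})$-disjointness from every non-incident branch set, but at the two endpoints where the path meets its incident branch sets a small trimming is needed to avoid double-counting. This endpoint adjustment accounts for the loss of $2$ in the bound, giving the claimed $(M+\tfrac{M}{2}-2)$-fat $H$ minor in $G_{\lambda,k}$.

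The main obstacle will be this last bookkeeping step: carefully selecting paths inside the connected pullbacks $P'_e$ so that the resulting family simultaneously satisfies the connectivity, adjacency, and disjointness requirements of a fat minor without bringing the chosen path too close to neighboring branch sets. The coarse bottlenecking hypothesis on $G$ enters only indirectly, through Corollary~\ref{QI_Contition}, to justify that the iterated skeleton $G_{(\lambda,k)_{(2,2)}}$ faithfully reflects the coarse structure of $G_{\lambda,k}$ so the minor we pull back is genuinely informative; once this is in place, the argument is essentially an application of Lemma~\ref{Lemma: BigContraction} set-by-set.
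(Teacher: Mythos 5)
Your overall strategy (pull the branch sets and paths back along the natural map and apply Lemma \ref{Lemma: BigContraction} to get $(M+\tfrac{M}{2})$-disjointness of the preimages) matches the paper's, but there is a genuine gap in the middle step, and it is exactly the point where the $-2$ in the bound actually comes from. You assert that the preimage of a connected set is connected because each block is ``a $2$-connected block of $G_{\lambda,k}$ by construction'' and every edge of the double skeleton lifts to an edge. But $2$-connected in this paper means $2$-connected in the metric sense of the definition of $M$-connected sets: any two points are joined by a chain with consecutive distances at most $2$. Such a set need not be connected as a subgraph of $G_{\lambda,k}$ (the intermediate vertex realizing a distance-$2$ step may lie outside the block, e.g.\ in an adjacent layer). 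So $B'_v$ and $P'_e$ are only $2$-connected, not connected, and your pulled-back configuration is not yet a minor model. The paper's fix is to replace each preimage by its $1$-ball, which \emph{is} connected; fattening each of two sets by radius $1$ reduces their pairwise distance by $2$, and that is the entire source of the $(M+\tfrac{M}{2})\to(M+\tfrac{M}{2}-2)$ loss.

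By contrast, your proposed explanation of the $-2$ --- trimming the extracted paths near their endpoints ``to avoid double-counting'' --- is not a valid mechanism: shortening a path at its ends cannot decrease its distance to a \emph{non-incident} branch set, so it cannot account for a uniform loss of $2$ in the disjointness constant. The ``main obstacle'' you defer (extracting genuine paths inside the connected pullbacks) is thus both resting on a false premise (connectedness of the pullbacks) and aimed at the wrong bookkeeping; once you take $1$-balls, no delicate path extraction or endpoint adjustment is needed. The coarse bottlenecking hypothesis plays no explicit role in this step in the paper either, so that part of your discussion is harmless.
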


\begin{proof}
    As $G_{(\lambda,k)_{(2,2)}}$ contains a $M$-fat $H$ minor for $M\geq 4$, the branch sets and paths are at least $M$-disjoint except where they are incident in $H$. By Lemma \ref{Lemma: BigContraction} the preimages of the sets are $(M+\lfloor\frac{M}{2}\rfloor)$-disjoint. These sets are $2$-connected and so the $1$-neighborhoods around them are are connected. This gives that the $1$-neighborhoods are at least $(M+\lfloor\frac{M}{2}\rfloor-2)$-disjoint except where incident in $H$. This gives a $(M+\lfloor\frac{M}{2}\rfloor-2)$-fat $H$ minor in $G_{\lambda,k}$.    
\end{proof}

\begin{Corollary}
\label{Corollary: (M+1)-fat}
     Let $G_{\lambda,k}$ be a skeleton of a graph such that $G_{(\lambda,k)_{(2,2)}}$ contains a $M$-fat $H$ minor for $M\geq 6$, $G_{\lambda,k}$ contains at least a $(M+1)$-fat $H$ minor.
\end{Corollary}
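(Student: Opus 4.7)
The plan is to invoke the Dieting Lemma (Lemma~\ref{Lemma:Diet}) and then drop the resulting fat-minor parameter down to $M+1$ by the obvious monotonicity of fat minors in the disjointness constant. Since $M \geq 6 \geq 4$, the hypothesis of Lemma~\ref{Lemma:Diet} is met, so $G_{\lambda,k}$ contains an $\bigl(M + \tfrac{M}{2} - 2\bigr)$-fat $H$ minor. It then suffices to verify the inequality $M + \tfrac{M}{2} - 2 \geq M + 1$, which rearranges to $\tfrac{M}{2} \geq 3$, i.e., $M \geq 6$ — precisely the stated hypothesis. Finally, any $N$-fat $H$ minor is automatically an $N'$-fat $H$ minor for every $N' \leq N$, because the branch sets, being pairwise $N$-disjoint except where incident in $H$, remain pairwise $N'$-disjoint at the smaller scale; applying this with $N = M + \tfrac{M}{2} - 2$ and $N' = M + 1$ yields the desired $(M+1)$-fat $H$ minor in $G_{\lambda,k}$.

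There is essentially no obstacle here: the entire content of the corollary is the arithmetic threshold $M \geq 6$, which is the break-even point at which the Dieting Lemma's guarantee of $M + \tfrac{M}{2} - 2$ first exceeds $M$ by at least one. At $M = 6$ the bound is tight, yielding exactly $M + 1 = 7$, which is presumably why the corollary is phrased with this cutoff rather than as a stronger nonlinear improvement. The only point worth checking carefully while writing up the proof is that monotonicity of fat minors in the disjointness constant is available for free from the definition of $M$-fat minor (parallel to the remark following Definition~\ref{mFnB} about $M$-fat $n$-bottlenecking); if desired, this can be recorded as a one-line remark to make the appeal explicit.
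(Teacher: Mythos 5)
Your proof is correct and is exactly the argument the paper intends: apply the Dieting Lemma (valid since $M \geq 6 \geq 4$) to get an $\bigl(M+\tfrac{M}{2}-2\bigr)$-fat $H$ minor, observe that $M+\tfrac{M}{2}-2 \geq M+1$ precisely when $M \geq 6$, and use monotonicity of fat minors in the disjointness parameter. The paper leaves this corollary unproved, and your write-up supplies precisely the missing one-line verification.
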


\begin{Theorem}[Diminishing Minor Theorem]\label{Theorem:StarvingMinor}
   If a coarsely bottlenecked graph $G$ does not contain a graph $H$ as an asymptotic minor then $G$ is quasi-isometric to a graph $G'$ with no $5$-fat $H$ minor. Furthermore such a $G'$ may be constructed by repeatedly taking $(2,2)$-skeletons of $G$. The constants of this quasi-isometry may be determined by the coarse bottlenecking number of $G$ as well as the scale where $G$ does not contain $H$ as a fat minor.
\end{Theorem}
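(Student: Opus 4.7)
I would construct $G'$ iteratively by taking $(2,2)$-skeletons. Since $H$ is not an asymptotic minor of $G$, there exists $M_0 \in \mathbb{N}$ such that $G$ contains no $M_0$-fat $H$ minor. Define $G^{(0)} := G$ and $G^{(i+1)} := (G^{(i)})_{(2,2)}$. Corollary~\ref{QI_Contition} shows that each transition $G^{(i)} \to G^{(i+1)}$ is a quasi-isometry, with constants controlled by the coarse bottlenecking data of $G$; since coarse bottlenecking is preserved under quasi-isometry, each $G^{(i)}$ is itself coarsely bottlenecked, and by composition we obtain a quasi-isometry $G \to G^{(n)}$ whose constants depend only on $n$ and on the coarse bottlenecking data of $G$.

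The plan is to track an invariant $N_i$ such that $G^{(i)}$ contains no $N_i$-fat $H$ minor, starting from $N_0 = M_0$. By the contrapositive of the Corollary following Lemma~\ref{Lemma:Diet}, whenever $N_i \geq 7$ one may set $N_{i+1} = N_i - 1$; this drives the invariant down to $N_i = 6$ in at most $M_0 - 6$ iterations. For the final reduction from $N = 6$ to $N = 5$, I would reopen the Dieting Lemma at $M = 5$: a hypothetical $5$-fat $H$ minor in $G^{(i+1)}$ has branch sets pairwise at graph distance $\geq 6$, whose preimages in $G^{(i)}$ are at distance $\geq 5 + \lceil 5/2 \rceil = 8$ by Lemma~\ref{Lemma: BigContraction} (using the integer rounding of the transition-pair count in its proof); taking connected $1$-balls around each preimage (valid because each branch set of $G^{(i+1)}$ is $2$-connected in $G^{(i)}$) then yields an $H$ minor in $G^{(i)}$ with branch sets pairwise at distance $\geq 7$, i.e., a $6$-fat $H$ minor — contradicting the invariant. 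Setting $G' := G^{(i+1)}$ completes the construction.

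The main obstacle is this terminal $N = 6 \to N = 5$ step, because the continuous Dieting estimate $M + M/2 - 2$ degenerates at $M = 5$ when read with real arithmetic: one must extract from the proof of Lemma~\ref{Lemma: BigContraction} the sharper integer bound that $n$-disjoint sets in $G_{(2,2)}$ have preimages in $G_{\lambda,k}$ at distance strictly greater than $n + \lceil n/2 \rceil$ for odd $n$, via an honest count of the alternating $2$-disjoint consecutive pairs along a shortest preimage path. Assembling the quasi-isometry constants across the $\leq M_0 - 5$ skeleton compositions is then routine via Lemma~\ref{uniformly bounded = quasi-isometry} and the uniform block-diameter bound supplied by Theorem~\ref{if BN then QI}, giving the dependence on the scale, the coarse bottlenecking number, and $M_0$ advertised in the statement.
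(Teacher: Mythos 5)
Your construction is essentially the paper's: both iterate $(2,2)$-skeletons and use Lemma \ref{Lemma:Diet} to drive down the fatness of any surviving $H$ minor, with the quasi-isometry supplied by Corollary \ref{QI_Contition}. The differences are in bookkeeping and in the endgame. The paper takes all $M+1$ skeletons at once, assumes a fat $H$ minor in the final graph $G'$, and pushes it back up the tower by repeated application of the Dieting Lemma until it exceeds $M$-fat in $G_{2,2}$; you run the descent forward, tracking the invariant ``no $N_i$-fat minor'' and decrementing by $1$ per skeleton via the contrapositive of the corollary to Lemma \ref{Lemma:Diet}. Both formulations hit the same wall at the bottom of the range: the map $M\mapsto M+\frac{M}{2}-2$ is a fixed point at $M=4$ and gains less than $1$ for $M<6$, so neither the corollary (stated only for $M\geq 6$) nor the raw Dieting estimate closes the step from ``no $6$-fat'' to ``no $5$-fat''. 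You identify this honestly and propose the right repair: sharpen the integer count in Lemma \ref{Lemma: BigContraction} so that a $5$-fat minor downstairs (branch sets at distance $\geq 6$, hence three consecutive distance-$2$ pairs on a shortest path) forces preimages at distance $\geq 9$, and after taking $1$-balls a $6$-fat minor upstairs. Do note that your intermediate line ``distance $\geq 5+\lceil 5/2\rceil=8$'' is inconsistent with the ``distance $\geq 7$ after $1$-balls'' you then draw; only the strict ``$\geq 9$'' version closes the loop, as you acknowledge in your final paragraph. For comparison, the paper's own endgame is looser than yours: it seeds the contradiction with a $4$-fat minor in $G'$, precisely the value at which the Dieting recursion yields no growth; the seed must be $5$-fat (consistent with the theorem's stated conclusion), after which the recursion $M\mapsto\frac{3M}{2}-2$ does diverge and exceeds $M+1$ within the allotted number of skeletons. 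So your proposal is the same method organized differently, and the obstacle you flag is genuine but is present (indeed unaddressed) in the paper's own write-up.
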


\begin{proof}
    Let $G$ be a coarsely bottlenecked graph that does not contain $H$ as an asymptotic minor. As $G$ is coarsely bottlenecked, it is quasi-isometric to a skeleton $G_{2,2}$. As asymptotic minors are preserved under quasi-isometry, $G_{2,2}$ does not contain $H$ as an asymptotic minor. This gives that there is some $M$ such that $G_{2,2}$ contains at most an $M$-fat $H$ minor. Take $M+1$ consecutive $(2,2)$-skeletons of $G_{2,2}$ to produce a graph $G'$, note that $G$ is quasi-isometric to $G'$. Now assume to a contradiction that $G'$ has a $4$-fat $H$ minor, corollary \ref{Corollary: (M+1)-fat} of Lemma \ref{Lemma:Diet} gives that $G_{2,2}$ now must contain a $M+1$-fat $H$ minor, this is a contradiction and so $G'$ must contain at most a $4$-fat $H$ minor.
\end{proof}

    It is of note that Theorem \ref{Theorem:StarvingMinor} can only create a quasi-isometric graph without $4$-fat $H$-minors, the limitation in further reduction is comes from two restrictions. First the contraction properties of skeletons, where large distances are always reduced, but distances of one are preserved if the edge crosses between two blocks. Second the need to take $\frac{k}{2}$-balls around the branch sets and paths to ensure that they are still connected.
    Theorem \ref{Theorem:StarvingMinor} reduces non-asymptotic minors to at most $3$-fat, this can be improved slightly achieved by relying on a Corollary \ref{Lemma:N disjoint gives M(N-1) disjoint} of Lemma \ref{Lemma:NoEdgeGivesDisjoint}, as seen in Theorem \ref{M,M reduces to 2} which reduces minors to at most $2$-fat. Theorem \ref{Theorem:Hammer} shows that improving this any further will require more assumptions about $H$, or the use of additional techniques beyond just skeletons. Recently, the authors of \cite{DHIM} have shown, by example, that the bound from Theorem \ref{M,M reduces to 2} is optimal in the general case.

\begin{Theorem} \label{M,M reduces to 2}
    Let $G$ be a coarsely bottlenecked graph, and let $H$ be a graph such that $G$ does not contain $H$ as an asymptotic minor. For any $M\in\mathbb{N}$ such that $G$ does not contain a $M$-fat $H$ minor, $G_{M,M}$ contains no $3$-fat $H$-minor. 
\end{Theorem}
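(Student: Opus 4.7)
The plan is to argue by contrapositive: assume $G_{M,M}$ contains a $3$-fat $H$-minor and build an $M$-fat $H$-minor in $G$, contradicting the hypothesis. The engine will be Corollary \ref{Lemma:N disjoint gives M(N-1) disjoint}, which amplifies distance $N$ in a skeleton to distance $M(N-1)$ in the underlying graph; applied with $N = 4$ (the gap forced by $3$-disjointness) this yields the clean $3 \rightsquigarrow 3M$ conversion that drives the argument.

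Concretely, I would take branch sets $\{S_v : v \in V(H)\}$ and branch paths $\{S_e : e \in E(H)\}$ witnessing the $3$-fat $H$-minor in $G_{M,M}$, pull them back under the natural map $f : G \to G_{M,M}$, and set $T_\bullet := f^{-1}(S_\bullet)$. Three things must be checked. First, each $T_\bullet$ is connected in $G$: every block $f^{-1}(u)$ is $M$-connected by construction, and adjacent blocks in $G_{M,M}$ come from blocks of $G$ joined by an actual edge, so the connectedness of $S_\bullet$ in the skeleton lifts through $f^{-1}$. Second, the incidence pattern of $H$ is preserved, since wherever $S_v$ and $S_e$ meet in the skeleton there is a lifted edge between $T_v$ and $T_e$ in $G$. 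Third, and most importantly, for non-incident $S_i, S_j$ the $3$-disjoint hypothesis gives $d_{G_{M,M}}(u,v) \geq 4$ for every $u \in S_i$, $v \in S_j$, and Corollary \ref{Lemma:N disjoint gives M(N-1) disjoint} upgrades this pointwise to $d_G(x,y) > 3M$ for every $x \in T_i$, $y \in T_j$. Hence $d_G(T_i, T_j) > 3M \geq M$, strictly stronger than the $M$-disjointness required.

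The main obstacle, and the source of the improvement over the $4$-fat conclusion of Theorem \ref{Theorem:StarvingMinor}, is extracting the full linear factor $M(N-1)$ rather than settling for the $N = 2$ estimate used inside the Dieting Lemma chain. Because that full factor is available here, no reinflation of branch sets by neighbourhoods is needed to restore connectedness, and the inequality $3M > M$ suffices directly. A small piece of bookkeeping remains: the preimage of a branch path in $G_{M,M}$ is a connected union of blocks rather than an honest path in $G$, so one has to pick one lifted edge per skeleton edge and splice them with geodesics inside each $M$-connected block to recover a genuine branch path; the resulting path sits inside $T_e$ and therefore inherits the $> 3M$ disjointness from every non-incident branch set and branch path.
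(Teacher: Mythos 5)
Your overall strategy is the same as the paper's: pull the $3$-fat $H$-minor back through $f$, use Corollary \ref{Lemma:N disjoint gives M(N-1) disjoint} to amplify skeletal disjointness to $G$-disjointness, and contradict the absence of an $M$-fat $H$-minor. But there is one genuinely wrong step in your write-up: the claim that each $T_\bullet = f^{-1}(S_\bullet)$ is connected in $G$ ``because every block is $M$-connected.'' A block is a maximal \emph{$M$-connected} subset of a layer, which is a chain condition on distances, not connectedness as a subgraph; for $M>1$ the preimage of a vertex of $G_{M,M}$, and hence of a connected set of $G_{M,M}$, need not be a connected subgraph of $G$. So your first ``check'' fails as stated, and your assertion that ``no reinflation of branch sets by neighbourhoods is needed'' is exactly backwards: some inflation (or splicing) is unavoidable, for branch sets just as much as for branch paths. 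The paper handles this by replacing each preimage with its $\frac{M}{2}$-neighborhood, which restores connectedness at a cost of $M$ in disjointness.

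Your closing ``bookkeeping'' remark is the right repair for branch paths, but its final claim is also false: a geodesic joining two points of a block at distance $\le M$ need not stay inside the block (or even inside the layer), so the spliced path does \emph{not} sit inside $T_e$ and does not inherit the full $>3M$ disjointness. What is true is that the spliced object lies in $N_{M/2}(T_e)$, so the disjointness degrades from $3M$ to $2M$ --- still comfortably above the $M$ you need, which is why the argument survives. Once you apply the same neighborhood inflation to the branch sets $T_v$ (not just the branch paths) and account for the resulting loss of $M$ in all the disjointness estimates, your proof coincides with the paper's, which uses the more conservative $N=3$ (giving $2M$-disjoint preimages) and then $\frac{M}{2}$-neighborhoods to land at $M$-disjointness.
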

\begin{proof}
    Let $G$ be coarsely bottlenecked graph that does not contain a $M$-fat $H$ minor. Assume to a contradiction that $G_{M,M}$ has a $3$-fat $H$ minor, this gives branch paths and sets in $G_{M,M}$ that are at least $3$-disjoint except where incident in $H$. By Lemma \ref{Lemma:N disjoint gives M(N-1) disjoint}, the preimages of these sets are at least $2M$-disjoint, so their $\frac{M}{2}$ neighborhoods are connected and at least $M$-disjoint. This gives a $M$-fat $H$ minor in $G$, and this is a contradiction.
\end{proof}

\begin{Remark}
    As a result of theorem \ref{M,M reduces to 2} and theorem \ref{Theorem:StarvingMinor}, we observe that the following weakened version of \cite{GP23} Conjecture 1.1 is true. This also follows from a result of \cite{DHIM}.
\end{Remark}

\begin{Corollary}(A weaker version of Conjecture 1.1 of \cite{GP23}) Let $X$ be a length space, and let $H$ be a finite graph. Then $X$ has no $K$-fat $H$ minor for some $K\in\mathbb{N}$ if and only if $X$ is quasi-isometric to a length space with no $\epsilon$-fat $H$ minor for any $\epsilon>0$. Furthermore, the constants of this quasi-isometry depend only on $K$, $H$ and $\epsilon$.
\end{Corollary}

\begin{proof}
    First apply Theorem \ref{M,M reduces to 2} to produce a simple bipartite graph with no minors that are greater than $2$-fat, then change the metric on this graph to a weighted metric with each edge counting for $\frac{\epsilon}{2}$.
\end{proof}

\begin{Remark}
    Seeing how Theorem \ref{M,M reduces to 2} is able to take any non-asymptotic minor to at most $2$-fat, one might hope an appropriate application of skeletons would be able to remove non-asymptotic minors of simple graphs entirely. This is not always the case. Theorem \ref{Theorem:Hammer} shows how it can fail for $D_3$ (two vertices joined by three multi-edges).
\end{Remark}
\begin{Lemma}\label{Lemma:Leaf}(See Figure \ref{fig: hammer})
    For a fixed $\lambda,k$ and $x_0$ there exists a graph $G'$ with an at most $2$-fat $D_3$ minor such that $G'_{\lambda,k}$ contains a $D_3$ minor. 
\end{Lemma}
\begin{proof}
    Construct graph $G'$ by starting at a vertex $x_0$ and attaching a path of length $\lambda - ( \frac{1}{2}k-1)$. At the end of this path, add two more edges and connect their endpoints with two paths of length $> 2\lambda+2k$. This structure is now a $2$-fat $D_3$ minor. As seen in the figure, $G'_{\lambda,k}$ contains a $D_3$ minor.
\end{proof}
\begin{figure}
   \centering    \includegraphics[width=1\linewidth]{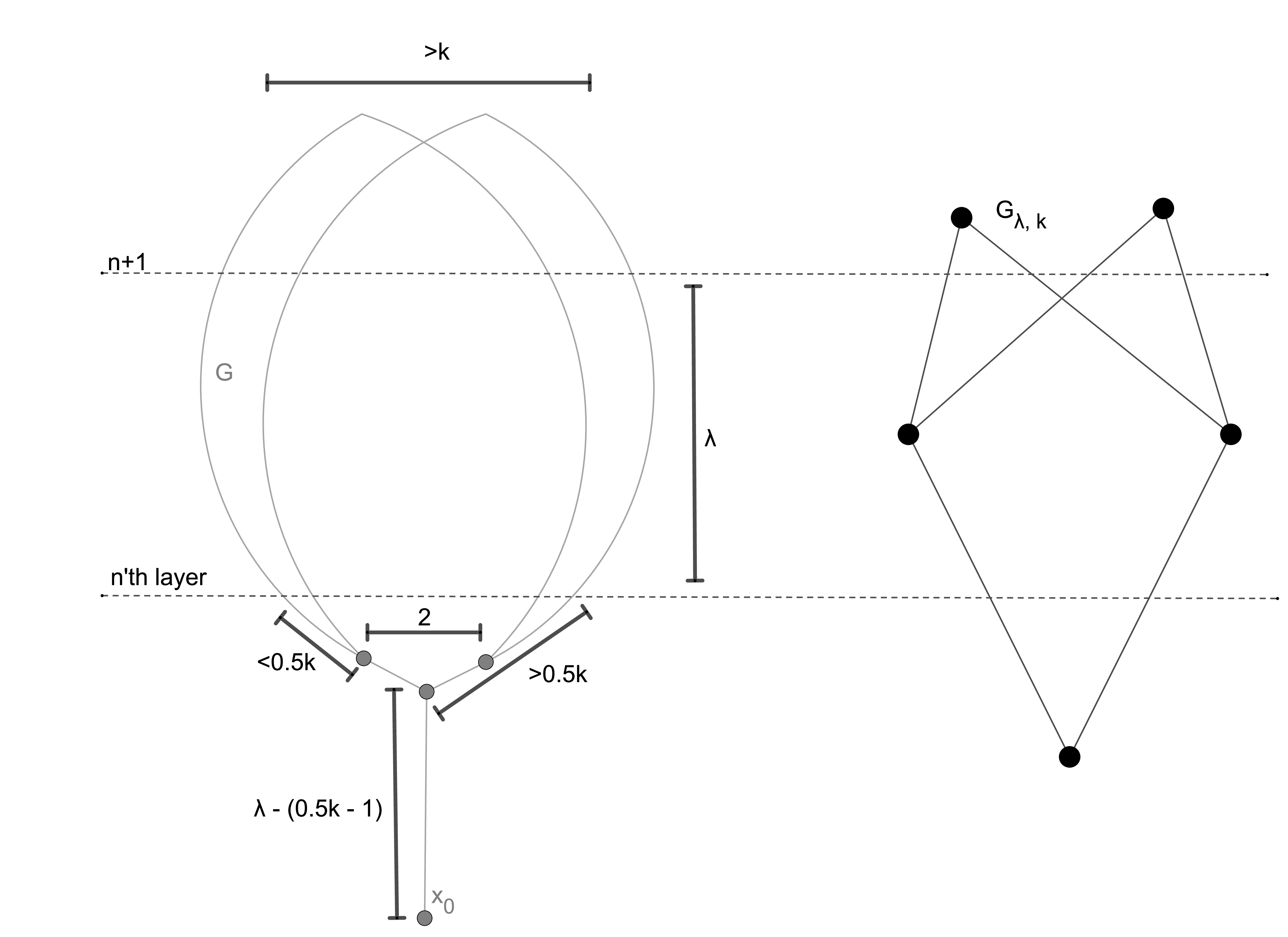}
    \caption{Illustration of lemma \ref{Lemma:Leaf} showing a non-asymptotic minor being preserved by any skeleton. Part of $G'$ is on the left in grey and its $G'_{\lambda,k}$ skeleton is in black on the right. layers are the dashed lines. Important distances are noted with measurement lines}
    \label{fig: hammer}
\end{figure}
\begin{Theorem}\label{Theorem:Hammer}
    There exists a graph $G$ with an at most $2$-fat $D_3$ minor such that every skeleton of $G$, for any choice of $\lambda,k$ and $x_0$ contains a $D_3$ minor.
\end{Theorem}
\begin{proof}
    Construct $G$ by taking an infinite ray, and at each vertex attaching a copy of $G'$ as constructed in Lemma \ref{Lemma:Leaf} for every possible combination of $\lambda$ and $k$. Now for any choice of $\lambda,k$ and $x_0$ the ray ensures that a copy of $G'$, for the appropriate $\lambda$ and $k$ is placed at a distance from $x_0$ such that $G_{\lambda,k}$ will contain a $2$-fat $D_3$ minor.
\end{proof}

\end{document}